\theoremstyle{plain}
\newtheorem{thm}{Theorem}[section]
\theoremstyle{definition}
\newtheorem{remark}[thm]{Remark}
\newcommand{\eR}{\mathbb{R}}
\newcommand{\eN}{\mathbb{N}}
\newcommand{\Ve}{\mathbb{V}}
\newcommand{\Uu}{\mathbb{U}}
\newcommand{\ve}{\mathbf{v}}
\newcommand{\we}{\mathbf{w}}
\newcommand{\uu}{\mathbf{u}}
\newcommand{\z}{\tilde{z}}
\newcommand{\ra}{\tilde{r}}
\def\eqn#1$$#2$${\begin{equation}\label#1#2\end{equation}}
\numberwithin{equation}{section}
\title[Hausdorff measure of critical set for Luzin $N$ condition]{Hausdorff measure of critical set for Luzin $N$ condition}
\author{Anna Dole\v{z}alov\'a}
\address{Faculty of Mathematics and Physics,
Charles University, Sokolovsk\' a 83, Praha, Czech Republic}
 \email{dolezalova@karlin.mff.cuni.cz}
 \author{Marika Hrube\v{s}ov\'a}
 \address{Faculty of Economics, University of South Bohemia, Studentsk\' a 13, \v Cesk\' e Bud\v ejovice, Czech Republic}
 \email{mhrubesova@ef.jcu.cz}
 \author{Tom\' a\v s Roskovec}
 \address{Faculty of Economics, University of South Bohemia, Studentsk\' a 13, \v Cesk\' e Bud\v ejovice, Czech Republic;  Department of Mathematics, University of Hradec Kr\' alov\' e, Rokitansk\'eho 62, 500 03 Hradec Kr\'alov\'e, Czech Republic}
 \email{troskovec@ef.jcu.cz}
\thanks{The third author was supported by the grant GA \v CR 20-19018Y}
\thanks{The first author was partially supported by the grant GA\v CR P201/18-07996S of the Czech
Science Foundation and by the Charles University, project GA UK No. 480120}
\subjclass[2000]{46E35}
\keywords{Luzin condition, Hausdorff measure, gauge function}
\begin{document}

\begin{abstract}
It is well-known that there is a Sobolev homeomorphism $f\in W^{1,p}([-1,1]^n,[-1,1]^n)$ for any $p<n$ which maps a set $C$ of zero Lebesgue $n$-dimensional measure onto the set of positive measure. We study the size of this critical set $C$ and characterize its lower and upper bounds from the perspective of Hausdorff measures defined by a general gauge function.
\end{abstract}
\maketitle


\section{Introduction}

\subsection{Motivation and history}

By $\Omega$ we denote a domain, $n$ stands for dimension and $\mathcal{L}_n$ denotes Lebesgue $n$-dimensional measure. A function $f\colon\Omega\to \eR^n$, $\Omega\subseteq \eR^n$, is said to satisfy the \textit{Luzin $N$ condition} if, for every $E\subseteq\Omega$, we have
$$ \mathcal{L}_n(E)=0 \implies \mathcal{L}_n\left(f(E)\right)=0.$$
Analogously,
$f$
fulfils the \textit{Luzin $N^{-1}$ condition} if, for every $E\subseteq\Omega$, we have
$$\mathcal{L}_n\left(f(E)\right)=0 \implies \mathcal{L}_n(E)=0.$$ These are crucial properties in mechanics of solids and other physical models. The Luzin $N$ condition (also known as the Lusin property or the $N$ property) prohibits the ``creation of matter'' by deformation and the Luzin $N^{-1}$ condition prohibits the ``disappearance of matter''. From the mathematical point of view, these conditions are bound to the question of validity of change of variables formula with minimal regularity requirements, see \cite[Theorem~8.4]{BojIwa1983}, \cite[Theorem~2.5, Chapter~5]{GRbook} and \cite{Haj1993}. Also, for Sobolev spaces the validity of the Luzin $N$ condition is equivalent to validity of area formula, see \cite{R1989} and \cite{M1994}, while the Luzin $N^{-1}$ condition is equivalent to co--area formula, see \cite{MSZ}, \cite{M2001} or \cite[Section~A.~8]{MFDbook}.

Concerning the characterization of the validity of the Luzin $N$ condition, Reshetnyak \cite{Res1} proved the validity of the condition $N$ for Sobolev homeomorphisms in $W^{1,n}$,  Marcus and Mizel \cite{MaMi} proved its validity for Sobolev mappings in $W^{1,p}$ for $p>n$. To show the optimality of these results, Ponomarev \cite{Ponomarev} (see also later \cite{Ponomarev2}) provided a Sobolev homeomorphism violating the Luzin $N$ condition for $W^{1,p}$, $1\leq p<n$ and Mal\'y and Martio \cite{MaMa} used the older Cesari construction \cite{Ces} to get a continuous $W^{1,n}$ mapping violating the Luzin $N$ condition.

The characterization of the validity of the Luzin $N^{-1}$ condition differs a lot. It is possible to construct a homeomorphism that compresses a set in order to map a set of positive measure onto the set of zero measure in any $W^{1,p}$. The Sobolev norm is not crucial, so the concept of distortion and the class of the mappings with finite distortion is needed. The positive result and its optimality are given by Kauhanen, Koskela, and Mal\'y in \cite{KaKoMa-Luzin} and \cite{KoMa-N-1}, some border cases are further covered by Kleprl\'ik in \cite{Klep}.

Let us mention that the counterexample constructions violating the Luzin $N$ condition by Ponomarev and by Cesari are fundamentally different. The counterexample violating the Luzin $N^{-1}$ condition is based on the construction by Ponomarev. 

\subsection{Ponomarev construction and its refinements}
We focus on the example given by Ponomarev, i.e., the homeomorphism in $W^{1,p}([-1,1]^n,[-1,1]^n)$ for $p<n$, which maps Cantor type set $C_A$ of measure zero onto Cantor type set $C_B$ of positive measure. The detailed construction is presented in Section \ref{SekcePonomarev}. The original construction considers the Lebesgue $n$-dimensional measure and Sobolev space $W^{1,p}$. However, this may be refined, we may ask about the size of the small Cantor type set $C_A$ with respect to the Hausdorff measure $\mathcal{H}^h$ based on a gauge function $h$. We also may consider the homeomorphism in some other spaces, in general, in some spaces strictly bigger than $W^{1,n}$ and defined in finer than Sobolev scales, such as grand Sobolev spaces $ W^{1,n)}$ (see \ref{SekcePreliminaries}. Preliminaries for definition) or Sobolev--Orlicz spaces. The choice of grand Sobolev space $W^{1,n)}$ is optimal in some sense in the perspective of spaces based on integrability of weak derivative, see \cite{KaKoMa-Luzin}.

It is well-known that the Hausdorff dimension of $C_A$ may be zero (see \cite{MaMa}), but Kauhannen \cite{Kauh} also studied the largest possible size of $C_A$. Obviously, the Hausdorff measure for gauge function $h(t)=t^n$ should be still zero, otherwise the example does not violate the Luzin $N$ condition. It was shown in \cite{MaMa} that for $f\in W^{1,n}$ we can always find a critical set $C$ of Hausdorff dimension 0 such that outside $C$ the Luzin $N$ condition holds. On the other hand, for $f$ in the grand Sobolev space $W^{1,n)}$, Kauhanen \cite{Kauh} showed that the Hausdorff dimension of the critical set $C_A$ can always exceed any number below $n$ by his choice of gauge function $h_s(t)=t^n \log^s\log(4+1/t)$ for $s>0$. He constructed the Ponomarev-type homeomorphism such that $0<\mathcal{H}^{h_s}(C_A)<\infty.$ No optimality of choice $h_s$ is discussed, but it answers the question of the possible Hausdorff dimension of the exceptional set, as there is no universal constant $d<n$ such that for each $f\in W^{1,n)}$ the Luzin $N$ condition holds if we omit a set of Hausdorff dimension $d$. We study the Hausdorff measure of the critical set in more general scales, not only the powers, resulting in the following statement:

\begin{thm}\label{th:velkyponomarev}
Let $Q_0=[-1,1]^n$, $\tau:(0,\infty)\to [1,\infty)$ be a monotone, continuous function such that $\lim_{t\to 0+}\tau(t)=\infty$ and for all  $p\in (0,1]$ there exists $x_p \in(0,1)$ such that for all $t\in (0,x_p)$ we have
$$
\frac{1}{\tau(pt)}>t^n.
$$
Let $h(t):[0,\infty)\to[0,\infty)$ be a gauge function, i.e., continuous non-decreasing function such that $h(0)=0$, and satisfying $h(t)=t^n\tau(t)$ on $(0,\infty)$. Then there exists a homeomorphism $f:Q_0\to Q_0$ such that
\begin{enumerate}
    \item $f$ is the identity on the boundary of $Q_0$,
    \item $f\in W^{1,n)}(Q_0, Q_0)$,
    \item $Jf>0$ a.e. in $Q_0$,
    \item if $E\subseteq Q_0$ with $\mathcal{H}^h(E)=0$, then $\mathcal{L}_n(f(E))=0$,
    \item there exists a set $C_A$ such that $\mathcal{H}^h(C_A)\in(0,\infty)$, $\mathcal{L}_n(C_A)=0$ and $\mathcal{L}_n(f(C_A))>0$.
\end{enumerate}

\end{thm}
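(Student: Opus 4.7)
My strategy is a tuned Ponomarev construction. Choose $b_k$ comparable to $2^{-k}$ (with a small gap ensuring that the target Cantor-type set $C_B$ has positive Lebesgue measure) and define the source side lengths $a_k$ implicitly by $2^{nk}h(a_k)=1$, equivalently $a_k=2^{-k}\tau(a_k)^{-1/n}$; existence and uniqueness follow from monotonicity, continuity and $\tau(0^+)=\infty$. Since $\tau(a_k)\to\infty$ one has $a_k/2^{-k}\to 0$, so $a_{k+1}<a_k/2$ for large $k$ (enough room to pack $2^n$ sub-cubes of side $a_{k+1}$ into each cube of side $a_k$ with positive gap), and $\mathcal L_n(C_A)=\lim_k 2^{nk}a_k^n=\lim_k \tau(a_k)^{-1}=0$. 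The growth hypothesis $1/\tau(pt)>t^n$ rewrites as $h(s)<p^n$ once $s$ is sufficiently small (depending on $p$), a uniform-in-$p$ gauge rate that is the key quantitative input for the Sobolev estimate.

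I then build $f$ by the standard Ponomarev recipe: place $2^{nk}$ disjoint closed source cubes $Q_k^i$ of side $a_k$ in $Q_0$ in a self-similar nested pattern, and the combinatorially matched target cubes $\widetilde Q_k^i$ of side $b_k$; on each annular region $Q_k^i\setminus\bigcup_jQ_{k+1}^j$ let $f$ be the canonical piecewise affine radial homeomorphism onto $\widetilde Q_k^i\setminus\bigcup_j\widetilde Q_{k+1}^j$, set $f=\mathrm{id}$ outside $\bigcup_{k,i}Q_k^i$, and extend by continuity to $C_A:=\bigcap_k\bigcup_iQ_k^i$. Items (1) and (3) are immediate from this construction. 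The identity $f(C_A)=C_B:=\bigcap_k\bigcup_i\widetilde Q_k^i$, together with $\mathcal L_n(C_A)=0$ and $\mathcal L_n(C_B)>0$, plus the value of $\mathcal H^h(C_A)$, gives (5): the cover by $2^{nk}$ cubes of side $a_k$ yields the upper bound $\mathcal H^h(C_A)\le\liminf_k 2^{nk}h(a_k)=1$, and the canonical self-similar probability measure $\mu$ on $C_A$ with $\mu(Q_k^i)=2^{-nk}$ satisfies $\mu(Q)\le C\,h(\operatorname{diam}Q)$ for every cube $Q$ meeting $C_A$ (using $h(a_k)=2^{-nk}$), hence $\mathcal H^h(C_A)>0$ by the mass distribution principle.

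The main technical obstacle is (2). A careful Ponomarev-type derivative estimate on each affine piece of the annular region at stage $k$, summed over all $2^{nk}$ pieces, yields a bound of the form
\[
\varepsilon\int_{Q_0}|Df|^{n-\varepsilon}\lesssim\sum_{k\ge 0}\Phi_k(\varepsilon),
\]
where each $\Phi_k(\varepsilon)$ is comparable to a negative power of $\tau(a_k)$ multiplied by a combinatorial factor arising from the annular geometry. The hypothesis $1/\tau(pt)>t^n$ is precisely what, through the defining relation $a_k^n\tau(a_k)=2^{-nk}$, forces $\tau(a_k)$ to grow fast enough in $k$ that this series stays bounded uniformly as $\varepsilon\to 0^+$, giving $f\in W^{1,n)}(Q_0,Q_0)$; verifying this uniform bound is the place where the full strength of the hypothesis on $\tau$ is used and is the most delicate step of the whole argument. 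Property (4) then follows by splitting any $\mathcal H^h$-null $E$ as $(E\setminus C_A)\cup(E\cap C_A)$: on $E\setminus C_A$, $f$ is locally bilipschitz (piecewise affine with $|Df|$ bounded on compact subsets away from $C_A$), so pushes $h$-null sets to $\mathcal L_n$-null sets via a Vitali-type covering; on $E\cap C_A$, any efficient cover of $E\cap C_A$ by cubes $Q_k^i$ realizing vanishing $\mathcal H^h$-content maps under $f$ to a cover of $f(E\cap C_A)$ by the target cubes $\widetilde Q_k^i$ of total Lebesgue measure comparable to the source cover's total $h$-measure (using $\mathcal L_n(\widetilde Q_k^i)=b_k^n\asymp h(a_k)$), hence zero.
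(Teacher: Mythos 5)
Your overall architecture matches the paper's: the same Ponomarev construction, with the source side lengths chosen so that the $k$-th generation cubes carry $h$-content $\approx 2^{-nk}$, the upper bound on $\mathcal{H}^h(C_A)$ by the canonical covers, and a Frostman/mass-distribution argument for the lower bound. Your treatment of (4) is via a covering/pushforward argument rather than the paper's measure-isomorphism built from the binary coding maps $z$ and $\tilde z$; both are workable, though yours still needs the step that reduces an arbitrary $\mathcal{H}^h$-efficient cover of $E\cap C_A$ to a cover by construction cubes (the same covering lemma that underlies the lower bound), and this should be said explicitly rather than left implicit in the phrase ``any efficient cover\dots by cubes $Q^i_k$''.

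The genuine problem is your account of where the hypothesis on $\tau$ is used. You assert that the estimate $\varepsilon\int_{Q_0}|Df|^{n-\varepsilon}\lesssim\sum_k\Phi_k(\varepsilon)$ produces terms $\Phi_k$ ``comparable to a negative power of $\tau(a_k)$'', and that the inequality $1/\tau(pt)>t^n$ is ``precisely what'' makes $\tau(a_k)$ grow fast enough for the series to stay bounded as $\varepsilon\to 0^+$. This is not how the estimate works, and if you tried to push this through you would be fighting the wrong battle. With the paper's choice $b_k=(1+a_k)/2$ (which you leave unspecified as ``comparable to $2^{-k}$'') one gets $\alpha_k=1/2$ and $\beta_k=2^{-k-1}$, and the integral over the $k$-th generation of annuli reduces, after summing the $2^{nk}$ copies, to a constant multiple of $\varepsilon^{-1}\bigl(a_{k-1}^\varepsilon-a_k^\varepsilon\bigr)$. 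The series telescopes to $\varepsilon^{-1}a_0^\varepsilon=\varepsilon^{-1}$, so $\varepsilon\int|Df|^{n-\varepsilon}$ is bounded uniformly in $\varepsilon$ for \emph{any} monotone $a_k\downarrow 0$, with no reference to $\tau$ at all. The hypothesis on $\tau$ enters only in defining the sequence $a_k$ (so that the cubes have the right $h$-measure and $a_k\to 0$); it has nothing to do with the grand Sobolev bound. Since you also did not pin down $b_k$, your sketch would not even yield the telescoping identity that makes the estimate trivial. I would rewrite this part: fix $b_k=(1+a_k)/2$, record $\alpha_k,\beta_k$, and carry out the telescoping; then the ``most delicate step'' you flagged disappears entirely, and the role of the hypothesis on $\tau$ shifts where it belongs, to the Hausdorff-measure computation and the well-posedness of the recursion defining $a_k$.

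Two smaller points. First, you write ``existence and uniqueness follow from monotonicity''; $h(t)=t^n\tau(t)$ with $\tau$ non-increasing need not be strictly increasing, so uniqueness of $a_k$ is not guaranteed---simply pick a measurable choice (e.g., the largest solution) as the paper does. Second, since $\tau$ is monotone with $\tau(0^+)=\infty$ it is non-increasing, and then $a_{k+1}/a_k=\tfrac12\bigl(\tau(a_k)/\tau(a_{k+1})\bigr)^{1/n}\le\tfrac12$ holds for every $k$, not just ``for large $k$''; no adjustment of the first finitely many generations is needed.
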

This is especially interesting for $\tau(t)$ being a slowly decreasing function for small $t$, such as $\log\log\log\dots (1/t)$. We can get as close to the power-type gauge function $h(t)=t^n$ as desired.

We also study the other endpoint of the Hausdorff scale. Past results are claiming the size of the exceptional set to be possibly very small, but up to our knowledge, the results consider only gauge functions in the form of power $h(t)=t^\alpha$. We prove that the exceptional set $C_A$ can be small in any possible scale of gauge functions.

\begin{thm}\label{th:malyponomarev}
Let $Q_0=[-1,1]^n$ and let $h:[0,\infty)\to [0,\infty)$ be a gauge function, i.e., continuous non-decreasing function such that $h(0)=0$. Then there exists a homeomorphism $f:Q_0\to Q_0$ such that
\begin{enumerate}
    \item $f$ is identity on the boundary of $Q_0$,
    \item $f\in W^{1,n)}(Q_0, Q_0)$,
    \item $Jf>0$ a.e. in $Q_0$,  
    \item there exists a set $C_A\subseteq Q_0$ such that $\mathcal{H}^h(C_A)=0$, $\mathcal{L}_n(C_A)=0$ and $\mathcal{L}_n(f(C_A))>0$.
\end{enumerate}
\end{thm}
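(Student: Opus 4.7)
\emph{Proof sketch.} The plan is to modify the Ponomarev construction of Section~\ref{SekcePonomarev} by choosing the preimage side-length parameters $a_k$ at level $k$ to decay fast enough that $\mathcal{H}^h(C_A)=0$, while keeping the image-side parameters $b_k$ at a fixed scale that ensures $\mathcal{L}_n(C_B)>0$. Recall that at level $k$ the construction places $2^{nk}$ closed cubes of side $a_k$ inside $Q_0$, nested so that their intersection is the Cantor set $C_A$, and the associated homeomorphism maps these cubes bijectively onto $2^{nk}$ cubes of side $b_k$ whose intersection is $C_B$.

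Fix $b_k=2^{-k-1}$, so that $\mathcal{L}_n(C_B)=2^{-n}>0$. Since $h$ is continuous with $h(0)=0$, one may recursively choose $a_k\searrow 0$ so small that $2^{nk}h(\sqrt{n}\,a_k)\le 2^{-k}$ for every $k\in\eN$. For any $\delta>0$ and any $k$ with $\sqrt{n}\,a_k\le \delta$, the level-$k$ cubes form an admissible cover, giving $\mathcal{H}^h_\delta(C_A)\le 2^{nk}h(\sqrt{n}\,a_k)\le 2^{-k}$; letting $k\to\infty$ yields $\mathcal{H}^h_\delta(C_A)=0$ for every $\delta>0$, whence $\mathcal{H}^h(C_A)=0$. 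The trivial bound $2^{nk}a_k^n\to 0$ is automatic and gives $\mathcal{L}_n(C_A)=0$.

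The main obstacle is verifying that this rapid decay of $a_k$ is compatible with $f\in W^{1,n)}(Q_0,Q_0)$. On each level-$k$ annular piece, the Ponomarev map is a radial near-affine interpolation between the shells of radii $(a_k,a_{k-1}/2)$ and $(b_k,b_{k-1}/2)$, and its weak derivative is bounded pointwise by $\max\{b_{k-1}/a_{k-1},\,b_k/a_k\}$. The $L^{n-\varepsilon}$ integral over the annulus decomposes into a bulk contribution on a shell of volume $\sim a_{k-1}^n$ where $|Df|\sim b_{k-1}/a_{k-1}$, and a peak contribution on the inner shell of volume $\sim a_k^n$ where $|Df|\lesssim b_k/a_k$. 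Summing the peak terms over the $2^{nk}$ annuli at level $k$ yields a quantity of order $2^{nk}b_k^{n-\varepsilon}a_k^{\varepsilon}\le a_k^{\varepsilon}$, which is summable in $k$ uniformly for $\varepsilon\in(0,n-1)$, while the bulk terms coincide (up to a bounded factor) with those of the standard Ponomarev construction already known to lie in $W^{1,n)}$. Consequently $\sup_{0<\varepsilon<n-1}\varepsilon\,\|Df\|_{n-\varepsilon}^{n-\varepsilon}<\infty$.

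The remaining conclusions follow at once: $Jf>0$ a.e.\ because each radial piece has positive Jacobian, $\mathcal{L}_n(f(C_A))\ge \mathcal{L}_n(C_B)>0$ by construction, and $f$ is the identity on $\partial Q_0$ thanks to the outermost shell of the construction.
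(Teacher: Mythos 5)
The Hausdorff estimate $\mathcal{H}^h(C_A)=0$ is in the same spirit as the paper's (pick $a_k$ so small that the level-$k$ cubes form a cover of small $h$-mass), and that part is sound. However, two other points are genuine gaps.

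First, your choice of image-side parameter does not give a legal Ponomarev construction. Taking the level-$k$ image cubes to have side length $2^{-k-1}$ means, in the paper's notation $\ra_k=2^{-k}b_k$, that the paper's $b_k$ is constant for $k\ge 1$. But the construction requires the image annulus $\tilde Q'_{\ve}\setminus\tilde Q_{\ve}=Q(\z_{\ve},\ra_{k-1}/2)\setminus Q(\z_{\ve},\ra_k)$ to be non-degenerate, i.e. $\ra_{k-1}/2>\ra_k$, which fails exactly when $b_{k-1}=b_k$. In other words, with your $b_k$ the $2^n$ image subcubes already tile their parent, there is no room for the radial interpolation, and $f$ cannot be a homeomorphism. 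The paper sidesteps this with $b_k=\tfrac12(1+a_k)$, which is strictly decreasing with $\lim b_k=\tfrac12>0$, and, crucially, forces $\alpha_k=\tfrac12$, $\beta_k=2^{-k-1}$ independently of the rate at which $a_k\to 0$.

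Second, the $W^{1,n)}$ estimate is not established. You claim the peak contribution is $\lesssim 2^{nk}b_k^{n-\varepsilon}a_k^{\varepsilon}\le a_k^{\varepsilon}$, but with your $b_k=2^{-k-1}$ one gets $2^{nk}b_k^{n-\varepsilon}=2^{-n}2^{(k+1)\varepsilon}$, so the bound $\le a_k^{\varepsilon}$ simply does not hold; and even if it did, $\sum_k a_k^{\varepsilon}$ is not shown to be finite uniformly for small $\varepsilon$ (as $\varepsilon\to 0$, $a_k^{\varepsilon}\to 1$ termwise). This is precisely why the paper uses the choice $b_k=\tfrac12(1+a_k)$: the resulting integral telescopes,
$$
\varepsilon\sum_{k}\bigl(a_{k-1}^{\varepsilon}-a_k^{\varepsilon}\bigr)=\varepsilon\,a_0^{\varepsilon}\le \varepsilon,
$$
giving a uniform bound for every admissible $a_k$, no matter how fast it decays. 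Your proposal has no substitute for this cancellation. A smaller remark: the claim that $2^{nk}a_k^n\to 0$ is ``automatic'' is false for gauge functions that vanish on a neighbourhood of $0$, so $\mathcal{L}_n(C_A)=0$ must be imposed separately (the paper does this by parameterizing the cube half-side as $r_k=2^{-k}a_k$ with $a_k\to 0$).

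In short, the idea of shrinking $a_k$ to kill $\mathcal{H}^h(C_A)$ is right, but the proposal misses that the image-side sequence must remain strictly decreasing with a positive limit, and it lacks the telescoping mechanism that makes the grand Sobolev norm finite for an arbitrarily fast decaying $a_k$. Without both, the proof does not go through.
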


This theorem is interesting for $h$ very rapidly increasing near $0$, typically with infinite one-sided derivative. We can construct a Ponomarev-type homeomorphism such that the critical set violating the Luzin $N$ condition is of measure 0 for the corresponding Hausdorff measure. This result naturally extends the well-known statement of possible Hausdorff dimension of the critical set being 0.

\subsection{Further applications of the Luzin {\it N} condition and related questions}
Let us introduce some closely related topics, applications, and development. We intend to promote papers and books that are essential for the topic, but we also point out some less known recent results.

From the historical point of view, the Peano curve \cite{Peano} presented in 1890 is probably the oldest and the most known case of violating the Luzin $N$ condition in some sense and the Cesari construction \cite{Ces} can be interpreted as Peano curve.

A question close to the Luzin $N^{-1}$ condition is the validity of Morse--Sard theorem in various settings, based on works of Morse \cite{Morse} and Sard \cite{Sard1}. In a simplified version, it states that for a sufficiently smooth function, the image of the set where Jacobian is zero should be of zero Lebesgue measure. This principle has been extended, relaxed, and developed in many directions and applications. Naturally, one wishes to state the size of the image more subtly, as in the Hausdorff dimension. One can transfer the case from the Euclidean space into the manifolds, see \cite{Sard2}. Also, the assumption of $C^k$ smoothness may be relaxed, so Lipschitz mappings \cite{ABC}, H\"older spaces \cite{BHS} Sobolev spaces \cite{dePasc, Fig2008} or BV spaces \cite{BKK} are also studied. Note that this list is picking just some highlights, and many other particular settings and applications were published recently, such as the application to PDEs in chemistry \cite{Zator} or the application in studies of the Besicovitch--Federer projection theorem \cite{Gal, GH}.

The other closely related question is the problem of the composition of operators and the regularity of the inverse operator. The composition may produce unlikely outcomes if the Luzin $N$ or $N^{-1}$ condition is not met. The boundedness and integrability of the distortion are studied to provide the validity of the Luzin $N^{-1}$ condition. We recommend classical books on this topic  \cite{AIM}, \cite{MFDbook},  \cite{Ric1993}, \cite{Vai1971}, and \cite{R1989}.

Another topic involving the Luzin $N$ condition is the question of the equivalence between the pointwise Jacobian and the distributive Jacobian, first asked by Ball \cite{Bal1977}. It is interesting as it may help to relax a lot of techniques and proofs. This question was addressed by M\"uller \cite{M1990}, by Iwaniecz and Sbordone \cite{IwaSbo1992}, and by Greco \cite{Gre1993} mostly by integrability properties. The integrability requirements may be significantly relaxed in case of the validity of the $N$ condition, as shown by D'Onofrio, Hencl, Mal\'y, and Schiattarella \cite{DHMS} based on the previous research by Henao and Mora-Corral \cite{HM-C}.

There is also a very interesting way to fail both of these conditions with such a restrictive setting as a Sobolev or even bi-Sobolev homeomorphism satisfying $Jf=0$ a.e. Such examples can map full measure set to zero measure set and zero measure set to full measure set. Also, these mappings provide a tool to construct other homeomorphisms with highly counter-intuitive properties concerning the preservation of matter or orientation, the change of the sign of the Jacobian and others, see \cite{Hkosoctverce}, \cite{DHSkosoctverce}, \cite{Ckosoctverce}, \cite{FM-CO}, \cite{Oliva2016} or \cite{LiMa}.

At the end of this section, we shortly present recent development concerning the research of the Luzin $N$ condition itself. For the survey of the development, see Koskela, Mal\'y, and Z\"urcher \cite{KMZ1}. For refinement by studying the modulus of continuity and the size of the critical set, see \cite{KMZ2}. The paper concerning the failure of the Luzin $N$ condition by Kauranen and Koskela \cite{KaKo} extended the classical result \cite{MaMa} and it was also later used by Zapadinskaya \cite{Zap2014} to transfer the knowledge from Euclidian case into more general metric measure spaces. Also, the counterexample of Ponomarev is refined with additional regularity such that it still violates the Luzin $N$ condition (see \cite{Ros1}) or the $N^{-1}$ condition (see \cite{KlMoRo}). In papers studying the Luzin $N$ condition in view of Hausdorff dimension, term  $(\alpha-\beta)$ $N$ {\it condition} is used, see \cite{Alberti2012, FeroneKoRo}.

\section{Preliminaries}\label{SekcePreliminaries}
By a {\it gauge function} $h(t):[0,\infty)\to[0,\infty)$ we denote a function satisfying
\begin{enumerate}
    \item $h$ is non-decreasing,
    \item $h(0)=0$,
    \item $h$ is continuous.
\end{enumerate}

By the {\it Hausdorff measure} $\mathcal{H}^{h}(A)$ of set $A\subseteq\mathbb{R}^n$ we understand
$$\mathcal{H}^{h}(A)=\lim_{\delta\to0_+}\left(\inf\left\{\sum_{i=1}^\infty h(\operatorname{diam} U_i): A\subseteq \bigcup_i U_i; \operatorname{diam}(U_i)<\delta \right\}\right).$$

The definition may slightly differ in literature. The limit $\delta\to 0_+$ can be replaced by supremum, and we may consider the covering system $U_i$ by both general open sets and balls.
Note that for the most classical case $h(t)=t^\alpha$ we write $\mathcal{H}^\alpha$ instead of $\mathcal{H}^{t^\alpha}$. 

By the {\it Hausdorff dimension} of set $A$ we understand a non-negative parameter $d$, such that
$$\dim_H(A)=\inf_{d\geq0}\{\mathcal{H}^{d}(A)=0\}.$$

We claim that our examples belong to the {\it grand Sobolev space} $W^{1,n)}$. This space is introduced in \cite{IwaSbo1992} by Iwaniec and Sbordone and we refer to \cite{DonSboSch2013} for a survey of the notion. The {\it grand Lebesgue norm} is
$$\|f\|_{q)}=\sup_{0<\varepsilon<q-1}\left(\frac{\varepsilon}{|\Omega|}\int_{\Omega}|f|^{q-\varepsilon} \right)^{\frac{1}{q-\varepsilon}}.$$
This norm defines the {\it grand Lebesgue space} $L^{q)}(\Omega)$, a Banach function space that is very close to $L^{q}$, the sharp inclusions explaining the relations between function spaces of interest are
$$
L^q(\Omega)\subsetneq L^q\log^{-1}(L)(\Omega)\subsetneq L^{q)}(\Omega)\subsetneq\bigcap_{\alpha>1} L^q\log^{-\alpha}(L)(\Omega)\subsetneq \bigcap_{1<p<q}L^p(\Omega).
$$
The grand Sobolev space is a set of such functions that the function itself and all its partial derivatives up to the desired rank belong to the corresponding grand Lebesgue space. We emphasize that usage of this modern tool allows for sharpening our result, see this also in \cite{Kauh}.

In this text we use notation $A\lesssim B$ and $A\approx B$. By $A\lesssim B$ we denote that there exists a constant $K$ independent on parameters and depending only on the dimension and gauge function $h$ such that $A \leq K B$. $A\approx B$ denotes both $A\lesssim B$ and $B\lesssim A$ hold. We use notation $\|x\|_\infty$ for the maximum norm of the vector $x$ and $Q_{a,r}=\{x\in\mathbb{R}^n: \|x-a\|_{\infty}<r\}$ for open $n$-dimensional cube of center $a$ and edge length $2r$.

\section{Ponomarev construction}\label{SekcePonomarev}
We describe the Ponomarev construction in general way with notation consistent with its description in \cite{MFDbook}. We obtain Sobolev homeomorphism $f:(-1,1)^n\to(-1,1)^n$ with $Jf>0$ a.e. violating the Luzin $N$ condition.

Let $\Ve$ be vertices of cube $[-1,1]^n$. Let $\Ve^k=\Ve\times\Ve\times\dots\times\Ve$, $k\in\eN$, be a set of indices and let us consider two strictly decreasing sequences $a_k$ and $b_k$ such that
\begin{enumerate}
    \item $a_0=1$, $b_0=1$,
    \item $\lim_{k\to \infty}a_k=0$,
    \item $\lim_{k\to \infty}b_k>0$.
\end{enumerate}
Note that this setting aims to break the Luzin $N$ condition. In order to break the Luzin $N^{-1}$ condition we demand $\lim_{k\to\infty}a_k>0$ and $\lim_{k\to\infty}b_k=0$ instead. However, in order to make the resulting mapping interesting, we  have to set $a_k$ and $b_k$ carefully and check the crucial property, the integrability of distortion.

Let us define $z_0=\z_0=0$ and
$$
r_k=2^{-k}a_k\text{ and }\ra_k=2^{-k}b_k.
$$
We start with $Q(z_0,r_0)=(-1,1)^n$ and proceed by induction. For ${\ve}=[v_1,v_2,v_3,\dots, v_k]\in{\Ve}^k$ we denote ${\we}({\ve})=[v_1,v_2\dots v_{k-1}]\in {\Ve}^{k-1}$ and we define
$$
z_{{\ve}}=z_{{\we}}({\ve})+\frac{r_{k-1}}{2}v_k=z_0+\sum_{i=1}^{k}\frac{r_{i-1}}{2}v_i.
$$
For simplicity we write ${\we}$ instead of ${\we}({\ve})$. Around center $z_{\ve}$ we define an outer and inner cube
$$
Q'_{\ve}=Q\left(z_{\ve}, \frac{r_{k-1}}{2}\right)\text{ and }Q_{\ve}=Q\left(z_{\ve}, r_k\right).
$$

\begin{figure}[h t p]

\begin{tikzpicture}[scale=1]
\newcommand\Square[1]{+(-#1,-#1) rectangle +(#1,#1)}
 
    \foreach \x in {-1, 1}
    \foreach \y in {-1, 1}
    \draw (\x,\y) \Square{1}; 
 
   \foreach \x in {-1, 1}
    \foreach \y in {-1, 1}
    \draw[fill= lightgray] (\x,\y) \Square{0.6}; 

\draw[->] (2.2,0)--(3.2,0);
  \node[above] at (2.7, 0) {$f_1$};
  
  \foreach \x in {-1,1}
    \foreach \y in {-1, 1}
    \draw (\x+5.5,\y) \Square{1}; 
 
   \foreach \x in {-1,1}
    \foreach \y in {-1, 1}
    \draw[fill= lightgray] (\x+5.5,\y) \Square{0.8};

    \foreach \x in {-1, 1}
    \foreach \y in {-1, 1}
    \draw (\x,\y-5) \Square{1}; 
 
   \foreach \x in {-1.3,-0.7,0.7, 1.3}
    \foreach \y in {-1.3,-0.7,0.7, 1.3}
    \draw (\x,\y-5) \Square{0.3}; 
    
   \foreach \x in {-1.3,-0.7,0.7, 1.3}
    \foreach \y in {-1.3,-0.7,0.7, 1.3}
    \draw[fill= lightgray] (\x,\y-5) \Square{0.08};

\draw[->] (2.2,-5)--(3.2,-5);
  \node[above] at (2.7, -5) {$f_2$};
  
  \foreach \x in {-1,1}
    \foreach \y in {-1, 1}
    \draw (\x+5.5,\y-5) \Square{1}; 
 
   \foreach \x in {-1.4,-0.6,0.6, 1.4}
    \foreach \y in {-1.4,-0.6,0.6, 1.4}
    \draw (\x+5.5,\y-5) \Square{0.4};
    
   \foreach \x in {-1.4,-0.6,0.6, 1.4}
    \foreach \y in {-1.4,-0.6,0.6, 1.4}
    \draw[fill=lightgray] (\x+5.5,\y-5) \Square{0.3};

\end{tikzpicture}
\caption{First two steps in Ponomarev construction of $f$. Above: $f_1$ maps $\bigcup_{{\ve}\in{\Ve}} Q_{\ve}$ onto $\bigcup_{{\ve}\in{\Ve}} \tilde{Q}_{\ve}$. Below: $f_2$ maps $\bigcup_{{\ve}\in{\Ve^2}} Q_{\ve}$ onto $\bigcup_{{\ve}\in{\Ve^2}} \tilde{Q}_{\ve}$.}
\label{obrazek_konstrukce}
\end{figure}

In the $k$-th step of the construction, we use indices ${\ve}\in {\Ve}^k$ and produce $2^{nk}$ cubes $Q_{\ve}$, which are copies of the same cube.

We get a Cantor-type set $C_A$ defined as
$$
C_A=\bigcap_{k=1}^\infty\bigcup_{{\ve}\in{\Ve}^k}Q_{\ve}=C_a\times C_a\times\dots\times C_a,
$$
where $C_a$ are one-dimensional Cantor-type sets. Its construction is illustrated on the left-hand side of Figure \ref{obrazek_konstrukce}.

Analogously for the image we define the first cube as $\tilde{Q}(z_0,r_0)=(-1,1)^n$ and centers as
$$
\z_{{\ve}}=\z_{{\we}}+\frac{\ra_{k-1}}{2}v_k=\z_0+\sum_{i=1}^{k}\frac{\ra_{i-1}}{2}v_i,
$$
and we define a structure of cubes by
$$
\tilde{Q}'_{\ve}=Q(\z_{\ve}, \frac{\ra_{k-1}}{2})\text{ and }\tilde{Q}_{\ve}=Q(\z_{\ve}, \ra_k).
$$
We further define $C_B$ as
$$
C_B=\bigcap_{k=1}^\infty\bigcup_{{\ve}\in{\Ve}^k}\tilde{Q}_{\ve}=C_b\times C_b\times\dots\times C_b,
$$
where $C_b$ are again one-dimensional Cantor-type sets.

Concerning the Lebesgue measure of both $C_A$ and $C_B$, we obtain
$$
\mathcal{L}_n(C_A)=\lim_{k\to \infty}\mathcal{L}_n\left(\bigcup_{{\ve}\in{\Ve}^k} Q_{\ve}\right) =\lim_{k\to \infty}2^{nk}(2r_k)^n=\lim_{k\to \infty}2^{nk-nk}2^na_k^{n}=0,
$$
$$
\mathcal{L}_n(C_B)=\lim_{k\to \infty}2^{nk}(2\ra_k)^n=\lim_{k\to \infty}2^{nk-nk}2^nb_k^n=2^n(\lim_{k\to \infty} b_k)^n>0.
$$

Our goal is to define a sequence of homeomorphism $f_k:[-1,1]^n\to[-1,1]^n$ such that its limit $f$ is a homeomorphism mapping $C_A$ onto $C_B$, as we demonstrate in Figure \ref{obrazek_konstrukce}. We start with $f_0(x)=x$. To define $f_1$, we map $Q_\ve$ onto $\tilde{Q}_\ve$ homogenously with respect to the centres $z_{\ve}$ and $\z_{\ve}$ for all ${\ve}\in{\Ve}$. We define $f_1$ from $Q'_{\ve}\setminus Q_{\ve}$ onto $\tilde{Q}'_{\ve}\setminus \tilde{Q}_{\ve}$ radially for the supremum norm with respect to the centres $z_{\ve}$ and $\z_{\ve}$.
In the general step, we keep $f_k=f_{k-1}$ on $[-1,1]^n\setminus(\bigcup_{{\ve}\in{\Ve}^{k}}Q'_{\ve})$. It remains to define $f_k$ inside copies of $Q'_{{\ve}}$. We use the homogeneous mapping of $Q_{\ve}$ onto $\tilde{Q}_{\ve}$ and the radial mapping of $Q'_{\ve}\setminus Q_{\ve}$ onto $\tilde{Q}'_{\ve}\setminus \tilde{Q}_{\ve}$, both with respect to centres $z_{\ve}$ and $\z_{\ve}$, see Figure \ref{obrazek_jeden_ctverec}.

\begin{figure}[h t p]

\begin{tikzpicture}[scale=1]
\newcommand\Square[1]{+(-#1,-#1) rectangle +(#1,#1)}
 
    \draw (1,0) \Square{1}; 
    \draw[fill= lightgray] (1,0) \Square{0.4}; 
    \draw[ultra thin,<->](0.6,0)--(1.4,0);
    \draw[ultra thin,<->](1,-0.4)--(1,0.4); 
    \draw[dotted](0.6,-0.4)--(0,-1);
    \draw[dotted](0.6,0.4)--(0,1);
    \draw[dotted](1.4,-0.4)--(2,-1);
    \draw[dotted](1.4,0.4)--(2,1);
    
    \draw[->] (2.2,0)--(3.2,0);
    \node[above] at (2.7, 0) {$f_k$};
    \draw (5,0) \Square{1.5}; 
    \draw[fill= lightgray] (5,0) \Square{1.2}; 
    \draw[ultra thin,<->](3.8,0)--(6.2,0);
    \draw[ultra thin,<->](5,-1.2)--(5,1.2); 
    \draw[dotted](3.8,-1.2)--(3.5,-1.5);
    \draw[dotted](3.8,1.2)--(3.5,1.5);
    \draw[dotted](6.2,-1.2)--(6.5,-1.5);
    \draw[dotted](6.2,1.2)--(6.5,1.5);

\end{tikzpicture}
\caption{Mapping $f_k$ transforms $Q_{\ve}$ onto $\tilde{Q}_{\ve}$ (the gray area) and $Q'_{\ve}\setminus Q_{\ve}$ onto $\tilde{Q}'_{\ve}\setminus \tilde{Q}_{\ve}$ (the white area), ${\ve}\in{\Ve}^k$.}
\label{obrazek_jeden_ctverec}
\end{figure}

Formally, we define
\begin{equation*}
f_k(x)=\begin{cases}f_{k-1}(x)&\text{ for }x\notin\bigcup_{{\ve}\in{\Ve}^{k}}Q'_{\ve},\\
f_{k-1}(z_{\ve})+(\alpha_k\|x-z_{\ve}\|_\infty+\beta_k)\frac{x-z_{\ve}}{\|x-z_{\ve}\|_\infty}&\text{ for }x\in Q'_{\ve}\setminus Q_{\ve}, {\ve}\in{\Ve}^k, \\
f_{k-1}(z_{\ve})+\frac{\ra_k}{r_k}(x-z_{\ve})&\text{ for }x\in Q_{\ve}, {\ve}\in{\Ve}^k,
\end{cases}
\end{equation*}
where $\alpha_k$ and $\beta_k$ are chosen for $f_k$ to map the annulus $Q'_{\ve}\setminus Q_{\ve}$ onto the annulus $\tilde{Q}'_{\ve}\setminus \tilde{Q}_{\ve}$, i.e, such that
\begin{equation}\label{z ceho alpha-beta}
\alpha_k r_k+\beta_k=\ra_k\text{ and }\alpha_k \frac{r_{k-1}}{2}+\beta_k=\frac{\ra_{k-1}}{2}.
\end{equation}
Note that such $f_k$ maps \begin{equation}\label{Kam zobrazim Qv}\bigcup_{{\ve}\in{\Ve}^j} Q_{\ve}\text{ onto }\bigcup_{{\ve}\in{\Ve}^j} \tilde{Q}_{\ve}
\end{equation}
for all $j\leq k$. Since $f_k$ is continuous and one-to-one mapping between compact spaces, it is a homeomorphism.

We need to estimate the derivatives of $f_k$ in $Q_{\ve}$ and $Q'_{\ve}\setminus Q_{\ve}$ for ${\ve}\in{\Ve}^k$. For $x\in Q_{\ve}$ we get
$$
|Df_k|=\frac{\ra_k}{r_k}=\frac{b_k}{a_k}.
$$
For $x\in Q'_{\ve}\setminus Q_{\ve}$ we should consider two possible directions of partial derivatives, based on which coordinate determinates the norm $\|x-z_{{\ve}}\|_\infty$. Without loss of generality, suppose it is the first coordinate. For $x\in Q'_{\ve}\setminus Q_{\ve}$ we estimate
\begin{equation}\label{odhad prac. derivace}
\begin{aligned}|D_{x_1}f_k|&=\left|\left( (\alpha_k\|x-z_{\ve}\|_\infty+\beta_k)\frac{x-z_{\ve}}{\|x-z_{\ve}\|_\infty}\right)_{x_i}\right|\leq \alpha_k,\\
|D_{x_i}f_k|&=\left|\left( (\alpha_k\|x-z_{\ve}\|_\infty+\beta_k)\frac{x-z_{\ve}}{\|x-z_{\ve}\|_\infty}\right)_{x_i}\right|\leq \alpha_k+\frac{\beta_k}{\|x-z_v\|_\infty},\quad i\neq 1.
\end{aligned}\end{equation}
Therefore, each mapping $f_k$ belongs to $W^{1,\infty}$ (however, the sequence is not bounded there).

The limit mapping $f$ is absolutely continuous on almost all lines which are parallel to the coordinate axes, since almost all lines do not intersect the Cantor set $C_A$ and hence $f$ is Lipschitz on such lines. Also, $f$ maps $C_A$ onto $C_B$, based on \eqref{Kam zobrazim Qv}. Its pointwise partial derivatives on $Q'_{\ve}\setminus Q_{\ve}$ for ${\ve}\in{\Ve}^k$ are the same as those of $f_k$. In the end, we estimate 
$$
\|Df\|^p_p=\sum_{k=1}^\infty\sum_{{\ve}\in{\Ve}^k}\int_{Q'_{\ve}\setminus Q_{\ve}}|Df|^p.
$$ 
We should also check that Jacobian is positive almost everywhere. Since $J_f$ is equal to $J_{f_k}$ on set $Q'_{\ve}\setminus Q_{\ve}$ and the union of these sets has full measure, it is enough to verify the positivity of $J_{f_k}$, which can be done by a straightforward calculation. 
\begin{remark}
The choice $a_k=\tfrac{1}{k+1}$ and $b_k=\tfrac{1}{2} (1+\tfrac{1}{k+1})$ provides pointwise estimate $Df\lesssim k$ for $x\in Q'_{\ve}\setminus Q_{\ve}$, $\mathcal{L}_n(|Q'_{\ve}\setminus Q_{\ve})\approx2^{-nk}\frac{1}{k^{n+1}}$ and $|Df|\in L^p$ is finite if $p<n$. Note that these estimates can be adjusted to the special choice of $a_k$ and $b_k$ and they differ in literature.
\end{remark}

\section{Proof of Theorem  \ref{th:velkyponomarev} and Theorem \ref{th:malyponomarev}}

We now present the estimate for the norm of the derivative for a fairly general choice of $a_k$ and $b_k$. We show that for this choice, the resulting mapping belongs to grand Sobolev space.

Let $a_k$ be an arbitrary monotone positive sequence with $a_0=1$ and $\lim_{k\to\infty}a_k=0$ and set 
$$
b_k=\frac{1}{2}(1+a_k).
$$
This together with \eqref{z ceho alpha-beta} implies 
$$
\alpha_k=2^{-1}\text{ and }\beta_k=2^{-k-1}
$$
for $k\geq 1$. 
For further use we prepare the pointwise estimate for partial derivative of $f_k(x)$ for $x\in Q'_{\ve}\setminus Q_{\ve}$, $\ve\in{\Ve}^k$ based on \eqref{odhad prac. derivace}, we get
$$
|Df_k(x)|= \displaystyle\max_{i\in\{1, \dots, n\}}\{|D_{x_i}f_k|\}=\operatorname{max}\left\{\alpha_k,\alpha_k+\frac{\beta_k}{\|x-z_v\|_\infty}\right\}\lesssim\frac{\beta_k}{\|x-z_v\|_\infty}.
$$
The following estimate is universal for both Theorem \ref{th:velkyponomarev} and Theorem \ref{th:malyponomarev} and may be used for any $a_k, b_k$ satisfying properties above. Using the fact that for each $k$ we have $2^{nk}$ annuli with the same size, between which the function differs only by translation, we calculate

\begin{align*}
\sup_{0<\varepsilon\leq n-1} & \varepsilon\int_{(-1,1)^n}|Df|^{n-\varepsilon} =   \sup_{0<\varepsilon\leq n-1}\varepsilon\left(\sum_{k=1}^\infty \sum_{\ve\in\Ve^k}\int_{Q'_\ve\setminus Q_\ve}|Df_k|^{n-\varepsilon}\right) \\
&\lesssim \sup_{0<\varepsilon\leq n-1}\varepsilon\sum_{k=1}^\infty 2^{nk}\int_{Q\left(0,\frac{r_{k-1}}{2}\right)\setminus Q\left(0,r_k\right)} \left(\frac{\beta_k}{\|x\|_\infty}\right)^{n-\varepsilon}\, dx\\
&\lesssim \sup_{0<\varepsilon\leq n-1}\varepsilon\sum_{k=1}^\infty 2^{nk}\int_{2^{-k}a_k}^{2^{-k}a_{k-1}} \left(\frac{2^{-k-1}}{t}\right)^{n-\varepsilon}t^{n-1}\, dt\\
&\lesssim \sup_{0<\varepsilon\leq n-1}\varepsilon\sum_{k=1}^\infty 2^{nk}\int_{2^{-k}a_k}^{2^{-k}a_{k-1}} 2^{(-k-1)(n-\varepsilon)}t^{-1+\varepsilon}\, dt\\
&\lesssim \sup_{0<\varepsilon\leq n-1}\varepsilon\sum_{k=1}^\infty \left(2^{(k+1)\varepsilon}\left[\varepsilon^{-1}t^\varepsilon\right]_{2^{-k}a_k}^{2^{-k}a_{k-1}}
\right)\lesssim \sup_{0<\varepsilon\leq n-1}\sum_{k=1}^\infty 2^{(k+1)\varepsilon}\left(2^{-\varepsilon k}a_{k-1}^\varepsilon - 2^{-\varepsilon k}a_k^\varepsilon 
\right)\\
&\lesssim \sup_{0<\varepsilon\leq n-1}2^{\varepsilon}\sum_{k=1}^\infty (a_{k-1}^\varepsilon-a_k^\varepsilon)\lesssim \sup_{0<\varepsilon\leq n-1} \left(a_0^\varepsilon - \lim_{k\to \infty}a_k^\varepsilon\right) = \sup_{0<\varepsilon\leq n-1}a_0^\varepsilon<\infty,
\end{align*}
since the limit of $a_k^\varepsilon$ is zero. Therefore both terms are finite and $f\in W^{1,n)}((-1,1)^n)$.

\begin{proof}[Proof of Theorem \ref{th:malyponomarev}]
We choose $a_k$ satisfying the conditions above (i.e., monotone positive with limit 0) such that
$$
h(c_n 2^{-k} a_k)<2^{-2nk},
$$
where $c_n=2\sqrt{n}$. We can do so, since $h$ is non-decreasing continuous and $\lim_{t\to 0_+}h(t)=0$. Set $b_k=1/2(1+a_k)$ as before. The Ponomarev type construction described in Section \ref{SekcePonomarev} ensures the properties (1) and (3) and the choice of parameters gives us (2).
It remains to prove (4). Since 
$$
C_A\subseteq \bigcup_{{\ve}\in{\Ve}^k}Q_{\ve}
$$
for an arbitrary $k$, from the definition of Hausdorff measure we have
$$
\mathcal{H}^{h}(C_A)\leq\lim_{k\to\infty}\sum_{{\ve}\in{\Ve}^k} h(\operatorname{diam} Q_{\ve})=\lim_{k\to\infty}2^{nk} h(c_n r_k)<\lim_{k\to\infty}2^{-nk}=0.
$$
Also $\mathcal{L}_n(C_A)=0$ and $\mathcal{L}_n(f(C_A))=\mathcal{L}_n(C_B)>0$ as was shown in Section \ref{SekcePonomarev}.
\end{proof}

\begin{proof}[Proof of Theorem \ref{th:velkyponomarev}]
The proof is divided into several steps.
\begin{enumerate}[label=(\roman*)]

\item Choice of $a_k$

\noindent   
We claim that we can find a decreasing sequence $a_k$ satisfying the properties from Section \ref{SekcePonomarev} such that $a_k^n\tau(2^{-k}c_n a_k)\approx 1$. 
Since $\tau$ is continuous and bounded by $1$ from below, for every parameter $p$ there has to be point $t_p\in (0,1]$ such that $1/\tau(pt_p)=t_p^n$ and $1/\tau(pt)>t^n$ on $(0, t_p)$. We set $a_k = t_{2^{-k}}$. To show that it is a monotone sequence, let us have $p_1>p_2$ and elaborate. From monotonicity of $\tau$ we have
$$
t_{p_1}^n=\frac{1}{\tau(p_1 t_{p_1})}> \frac{1}{\tau(p_2 t_{p_1})}.
$$
This implies that $t_{p_2}$ must be smaller than $t_{p_1}$, since $1/\tau(p_2t)>t^n$ on $(0, t_{p_2})$. Now choose $\varepsilon>0$ and find $p$ small enough such that $1/\tau(pt)<\varepsilon^n$ for $t\in(0,1]$. Since $t_p\in(0,1]$, we have $t_p^n<\varepsilon^n$. This ensures that the limit of $a_k$ is zero.
With this choice of the sequence $a_k$, for any ${\Uu}$ subsystem of ${\Ve}^k$ we obtain
\begin{equation}\label{podsystem_krychli}
\sum_{{\uu}\in{\Uu}} h(\operatorname{diam} Q_{\uu})= \#{\Uu} h(2^{-k}c_n a_k)\approx \#{\Uu} 2^{-nk}a_k^n\tau(2^{-k}c_n a_k) \approx 2^{-nk} \#{\Uu},
\end{equation}
where $\#{\Uu}$ denotes the number of elements of the system.

\item Properties (1) -- (3)

\noindent
By setting $b_k=(1+a_k)/2$ and proceeding as before, we obtain a Sobolev homeomorphism $f$ which satisfies properties (1) -- (3).

\item $0<\mathcal{H}^h(C_A)<\infty$

\noindent
We immediately see from \eqref{podsystem_krychli} that the Hausdorff measure of $C_A$ is finite, since
$$
\mathcal{H}^{h}(C_A)\leq \lim_{k\to\infty}\sum_{{\ve}\in{\Ve}^k} h(\operatorname{diam} Q_{\ve})\approx \lim_{k\to\infty} 2^{-nk} \#{\Ve}^k = 1.
$$
The other inequality is proven in several steps. We mimic the proof from \cite[Lemma 3.2]{Kauh}. 
Since $C_A$ is a compact set, it is enough to prove that for any finite open covering $\{U_j\}$ of $C_A$ we have 
\begin{equation}\label{pokryti_nerovnost}
\sum_{j} h(\operatorname{diam} U_j)\gtrsim  1.
\end{equation}
We may assume that there exists $x_j\in C_A\cap U_j$ for each $j$, then $U_j\subseteq B_j=B(x_j,\operatorname{diam} U_j)$. Therefore
\begin{align*}
\sum_{j} h(\operatorname{diam} U_j)=\sum_{j} h(\operatorname{diam} B_j/2)= \sum_{j} 2^{-n}(\operatorname{diam} B_j)^n\tau(\operatorname{diam} B_j/2)\\
\geq \sum_{j} 2^{-n}(\operatorname{diam} B_j)^n\tau(\operatorname{diam} B_j)=\sum_{j} 2^{-n}h(\operatorname{diam} B_j),
\end{align*}
so we may consider only coverings by balls in \eqref{pokryti_nerovnost}.
We now wish to show that for every $l\in\mathbb{N}$ and $j$ we have
$$
\sum_{{\ve}\in{\Ve}^l, Q_{\ve}\subseteq B_j}h(\operatorname{diam}Q_{\ve})\lesssim h(\operatorname{diam} B_j).
$$
This can be proven by taking $Q_{\ve}\subseteq B_j$ for some ${\ve}\in{\Ve}^l$ and $m$ the smallest integer such that $Q_{\uu}\subseteq B_j$ for some ${\uu}\in{\Ve}^m$ (obviously, $m\leq l$). Set
$$
{\Uu}=\{{\uu}\in{\Ve}^m: Q_{\uu}\cap B_j\neq\emptyset\}.
$$
Since $B_j$ is centered at a point from $C_A$, from the definition of $m$ we obtain
$$
r_m\lesssim \operatorname{diam} B_j \lesssim r_{m-1}.
$$
Therefore there exists an upper bound for the number of disjoint cubes of side length $r_{m-1}$, which have a non-empty intersection with $B_j$, and this upper bound is independent of $j$ and $m$. Since the size of ${\Uu}$ is at most $2^{n}$ times this number, we have an (independent) upper bound for $\#{\Uu}$ too.

Together with \eqref{podsystem_krychli} it provides
\begin{align*}
h(\operatorname{diam} B_j) &\geq h(\operatorname{diam}Q_{\uu}) \gtrsim \sum_{{\uu}\in{\Uu}} h(\operatorname{diam} Q_{\uu}) \approx 2^{-nm}  \#{\Uu} \\
&= 2^{-nl} \# \{{\ve}\in{\Ve}^l: Q_{\ve}\subseteq Q_{\uu}, {\uu}\in{\Uu}\} \\
&\approx \sum_{{\uu}\in{\Uu}} \sum_{\substack{{\ve}\in{\Ve}^l, \\ Q_{\ve}\subseteq Q_{\uu}}} h(\operatorname{diam} Q_{\ve}) \geq \sum_{{\ve}\in{\Ve}^l, Q_{\ve}\subseteq B_j} h(\operatorname{diam}Q_{\ve}).
\end{align*}
Finally, since $C_A$ is compact, there exists $k_0$ such that for every $Q_{\ve}\in{\Ve}^k$, $k\geq k_0$, we can find $j$ such that $Q_{\ve}\subseteq B_j$. For such $k$ we have
$$
\sum_j h(\operatorname{diam} B_j) \gtrsim \sum_j \sum_{\substack{{\ve}\in{\Ve}^k, \\ Q_{\ve}\subseteq B_j}}h(\operatorname{diam} Q_{\ve}) \geq \sum_{{\ve}\in{\Ve}^k} h(\operatorname{diam} Q_{\ve})\approx 1.
$$
This combined gives us the desired property that $\mathcal{H}^h(C_A)>0$. Combined with the fact that $f(C_A) = C_B$ we have (5) (the Lebesgue measure properties are obvious from previous sections).

\item Mapping $z$

\noindent
For each point $x\in C_A$ we can find ${\ve}_x$ from ${\Ve}^\mathbb{N}$ such that 
$$x=\bigcap_{j} Q_{({\ve}_x)_j}
$$
and this is a one-to-one correspondence. Let $\pi_i$ denote the projection of ${\ve}\in{\Ve}$ to its $i$-th coordinate. Define $c_i: C_A\to \{0,1\}^\mathbb{N}$ which (in each coordinate) tells whether we chose a cube ``on the right-hand side or on the left-hand side'', i.e., 
$$
c_i(x) = \{\pi_i(({\ve}_x)_j)\}_{j=1}^\infty.
$$
Next consider a function $bin: \{0,1\}^\mathbb{N}\to [0,1]$, which takes ${\ve}$ and interprets it as the number $0.{\ve}_1 {\ve}_2 \dots$ written in the binary system. This is obviously onto, however, it is not injective (because both $(0,1,1,1,\dots)$ and $(1,0,0,0,\dots)$ are mapped to $1/2$).
We denote 
$$
z(x) = (bin(c_1(x)),\dots,bin(c_n(x))):C_A\to [0,1]^n.
$$
Then $z$ is onto and it is injective outside of the set 
\begin{align*}
S&=\{ x\in C_A: bin(c_i(x))=k/2^j \text{ for some } i\in\{1,\dots,n\}, j\in\mathbb{N}_0  \text{ and } k\in\{0,\dots,2^j\} \}\\
&=\{ x\in C_A: c_i(x) \text{ is constant from some index } j_0 \in\mathbb{N} \text{ for some } i\in\{1,\dots,n\} \},
\end{align*}
which consists of preimages of boundaries of dyadic cubes in $[0,1]^n$.

\item Image of $\mathcal{H}^h$ under $z$

\noindent
We start with showing that $\mathcal{H}^h(S)=0$ and $\mathcal{L}_n(z(S))=0$. The second statement follows simply from the fact that the boundary of a dyadic cube is a set of (Lebesgue) measure zero and $z(S)$ is their countable union. The first statement is proven in a similar way since $S$ is a countable union of the sets in the form
$$
S_{i,j,k}= \{ x\in C_A: bin(c_i(x))=k/2^j \}
$$
for $i\in\{1,\dots,n\}, j\in\mathbb{N}_0  \text{ and } k\in\{0,\dots,2^j\}$.
These are (up to a permutation of coordinates and a translation) equal to $\{0\}\times C_a \times \dots \times C_a$ and $\mathcal{H}^h(S_{i,j,k})=0$, because $\mathcal{H}^h(C_A)<\infty$ and $C_A$ contains uncountably many disjoint copies of this set.

Now we show the equality of measures 
$$
z(\mathcal{H}^h)=\left(\mathcal{H}^h(C_A)\right)^{-1}\mathcal{L}_n.
$$
For any open dyadic cube $D$ of the edge length $2^{-j}$ take the corresponding ${\ve}\in{\Ve}^j$. Then $\mathcal{L}_n(D)=2^{-jn}$ and $\mathcal{H}^h(z^{-1}(D)) = \mathcal{H}^h(Q_{\ve}\cap C_A) = 2^{-jn}\mathcal{H}^h(C_A)$, because $z^{-1}(D) = Q_{\ve}\cap C_A \setminus S'$, where $S'\subseteq S$.

The system
$$
\mathcal{D} = \{D: D \text{ is an open dyadic cube}\}\cup\{S': S' \text{ is a measurable subset of } S \}
$$
is closed on finite intersections and the sigma algebra generated by $\mathcal{D}$ contains all Borel sets in $[0,1]^n$. Since $z(\mathcal{H}^h) = \left(\mathcal{H}^h(C_A)\right)^{-1}\mathcal{L}_n$ on elements from $\mathcal{D}$, they are the same on $[0,1]^n$.

\item Property (4)

\noindent
We can analogously construct $\tilde{z}: C_B\to [0,1]^n$ for which
$$
\tilde{z}(\mathcal{L}_n) = \left(\mathcal{L}_n(C_B)\right)^{-1}\mathcal{L}_n.
$$
Then from the fact that $f(Q_{\ve})=\tilde{Q}_{\ve}$ for an arbitrary ${\ve}$ it follows that $\tilde{z} \circ f = z$, i.e., the following diagram commutes:

\begin{figure}[h]

    \begin{tikzpicture}[scale=1]
  
   \node at (1.5,2) {$C_B$};
   \node at (0,0) {$C_A$};
   \node at (3,0) {$[0,1]^n$};
   \draw[->] (0.2,0.3)--(1.2,1.7);
   \draw[->] (0.4,0)--(2.3,0);
   \draw[->] (1.6,1.7)--(2.8,0.3);
   \node at (0.55,1.1) {$f$};
   \node at (2.55,1.1) {$\tilde{z}$};
   \node at (1.4,-0.2) {$z$};
   \draw (1.4, 0.8) circle (0.1);

    \end{tikzpicture}

\end{figure}

Because the injectivity is broken only on $S$ and $\mathcal{H}^h(S)=0$ and $\mathcal{L}_n(z(S))=0$, for an arbitrary measurable $E\subseteq C_A$ we have
$$
\mathcal{H}^h(E)=0\iff \mathcal{L}_n(z(E))=0\iff \mathcal{L}_n(\tilde{z}(f(E)))=0\iff \mathcal{L}_n(f(E))=0.
$$
The Luzin $N$ condition holds outside of $C_A$ since $f$ is locally Lipschitz there, and any set with finite measure $\mathcal{H}^h$ is of zero Lebesgue measure. Therefore property (4) holds.
\end{enumerate}
\end{proof}

\section*{Acknowledgements}
We wish to thank Stanislav Hencl for valuable comments on the topic of this paper. We would also like to thank David Hru\v{s}ka for inspiring discussions. 
\bibliographystyle{plain}

\end{document}